\documentclass[11pt]{amsart}

\usepackage{amsmath,amssymb,mathtools,cite}
\usepackage[hidelinks]{hyperref}

\allowdisplaybreaks

\newtheorem{theorem}{Theorem}[section]
\newtheorem{proposition}[theorem]{Proposition}
\newtheorem{lemma}[theorem]{Lemma}
\newtheorem{corollary}[theorem]{Corollary}

\newcommand{\C}{\mathbb C}

\newcommand{\Hol}{\operatorname{Hol}}

\begin{document}

\title[Products of Berezin Transforms]
{Products of Berezin Transforms on Generalized Fock Spaces}

\author{Hao Tian}
\address{School of Mathematics, East China University of
Science and Technology, Shanghai, 200237, China}
\email{tianhao20000420@163.com}
\thanks{Corresponding author: Hao Tian}

\subjclass[2020]{47G10; 32A15}
\keywords{Berezin transform,
generalized Fock space,
product,
commutativity}

\begin{abstract}
We study products $B_{\alpha,m}^{(d)}B_{\beta,m}^{(d)}$ of Berezin transforms on generalized Fock spaces over $\mathbb C^d$. We establish a result for $m\neq2$: if $B_{\alpha,m}^{(d)}B_{\beta,m}^{(d)}$ coincides with another such product, then the weight parameters of the second product must be $(\alpha,\beta)$. As consequences, we characterize when two Berezin transforms commute and when the product of two Berezin transforms is again a Berezin transform.
\end{abstract}

\maketitle

\section{Introduction}

In the classical Fock space,
the Berezin transform coincides with the
Euclidean heat transform,
which is given by convolution
with a Gaussian kernel.
This connection has long been used to study
Toeplitz operators, quantization, and approximation on Fock spaces
\cite{BergerCoburn,Bauer2009,BauerCoburnIsralowitz,
BauerCoburn,BauerCoburnHagger}.
More recently, the Berezin transform has been used to study boundedness and compactness of Toeplitz operators through quantum harmonic analysis \cite{DewageMitkovski2025}, and to study Toeplitz operators on polyanalytic Fock spaces \cite{FulscheHagger2024}.
Fix $d\geq1$ and $m>0$, and put $q=\frac{2}{m}$.
For $\alpha>0$, let
\begin{equation}\label{eq:measure}
 \mathrm{d}\mu_{\alpha,m}^{(d)}(z)
 =
 \frac{\Gamma(d)}{\pi^d q\Gamma(qd)}
 \alpha^{qd}e^{-\alpha|z|^m}\,
 \mathrm{d}V(z)
\end{equation}
be the normalized measure on $\C^d$, where $\mathrm{d}V$ is the volume measure.
Denote by $F_{\alpha,m}^2(\C^d)$
the generalized Fock space
of entire functions on $\C^d$ that
are square integrable
with respect to
the measure
$\mu_{\alpha,m}^{(d)}$;
that is,
$$
 F_{\alpha,m}^2(\C^d)
 =
L^2(\C^d,\mu_{\alpha,m}^{(d)})\cap\Hol(\C^d).
$$
Let
$K_{\alpha,m}^{(d)}$ denote the reproducing kernel of
$F_{\alpha,m}^2(\C^d)$. The Berezin transform is given by
\begin{equation}\label{eq:Berezin}
 (B_{\alpha,m}^{(d)}f)(z)
 =
 \int_{\C^d}f(w)
 \frac{|K_{\alpha,m}^{(d)}(w,z)|^2}
 {K_{\alpha,m}^{(d)}(z,z)}
 \,\mathrm{d}\mu_{\alpha,m}^{(d)}(w),
 \qquad f\in L^\infty(\C^d).
\end{equation}
For background on generalized Fock spaces and Berezin transforms, see
\cite{Berezin,BommierHato2010Berezin,BommierHatoYoussfi2007,
BommierHatoYoussfi2010,BommierHatoEnglisYoussfi2012,
Englis1994,Englis1995,Zhu}.
Since each $B_{\alpha,m}^{(d)}$ maps $L^\infty(\C^d)$ into $L^\infty(\C^d)$, their composition is well defined, and we define
$$
B_{\alpha,m}^{(d)}B_{\beta,m}^{(d)}f=B_{\alpha,m}^{(d)}(B_{\beta,m}^{(d)}f).
$$
We first consider the classical Gaussian case $m=2$,
where the Berezin transform reduces to the Euclidean heat transform.
Precisely, by \cite[Section~3, (3.1)]{BauerCoburn},
$B_{\alpha,2}^{(d)}=Q_{1/(4\alpha)}$, where
$$
(Q_tf)(w)=\frac{1}{(4\pi t)^d}
\int_{\C^d}f(w-z)e^{-\frac{|z|^2}{4t}}\,\mathrm{d}V(z),~
t>0,
$$
and $Q_tQ_s=Q_{t+s},~s,t>0$. Therefore, we obtain
\begin{equation*}
\begin{aligned}
Q_{\frac{1}{4\alpha}}
Q_{\frac{1}{4\beta}}=
Q_{\frac{1}{4\alpha}+\frac{1}{4\beta}}=
Q_{\frac{1}{4\beta}}Q_{\frac{1}{4\alpha}};
\end{aligned}
\end{equation*}
\begin{equation}\label{eq:Gaussian-composition}
\begin{aligned}
B_{\alpha,2}^{(d)}B_{\beta,2}^{(d)}
=
B_{1/\left(\frac{1}{\alpha}+\frac{1}{\beta}\right),2}^{(d)}
=B_{\beta,2}^{(d)}B_{\alpha,2}^{(d)}.
\end{aligned}
\end{equation}
Thus
$\{B_{\alpha,2}^{(d)}:\alpha>0\}$ is commutative and
closed under products.
This leads to the following question for $m\ne2$.

\noindent
\textbf{Question 1.}
\emph{For fixed $d\geq1$ and $m>0$ with $m\neq2$, when does
$$B_{\alpha,m}^{(d)}B_{\beta,m}^{(d)}=
B_{\eta,m}^{(d)}B_{\gamma,m}^{(d)}$$
hold for $\alpha,\beta,\eta,\gamma>0$ ?}

Question 1 fixes $m$ and asks when two products determined by ordered pairs $(\alpha,\beta)$ and $(\eta,\gamma)$ can coincide.
The following Question 2 fixes a pair $\alpha\neq\beta$ and asks for which $m$ the corresponding Berezin transforms commute.
In the one-variable case, Borichev, Fantolini, and Youssfi \cite{BFY}
recently proved that the family
$\{B_{\alpha,m}^{(1)}:\alpha>0\}$
is commutative if and only if $m=2$.
They also proved that, when $m$ is not an even positive integer,
every fixed pair of distinct parameters fails to commute.
This leads to the following question.

\noindent
\textbf{Question 2.}
\emph{For fixed $d\geq1$ and $\alpha,\beta>0$ with $\alpha\neq\beta$,
when does
$$B_{\alpha,m}^{(d)}B_{\beta,m}^{(d)}=
B_{\beta,m}^{(d)}B_{\alpha,m}^{(d)}$$~hold for $m>0$ ?}

The following theorem gives a complete answer to Question~1.

\begin{theorem}\label{thm:main}
Let $d\geq1$, $m>0$ with $m\ne2$, and let
$\alpha,\beta,\eta,\gamma>0$. Then
$$
B_{\alpha,m}^{(d)}B_{\beta,m}^{(d)}
=
B_{\eta,m}^{(d)}B_{\gamma,m}^{(d)}
\iff
(\alpha,\beta)=(\eta,\gamma).
$$
\end{theorem}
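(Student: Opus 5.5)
The plan is to test both products against radial functions at the origin and compare the resulting radial kernels. The direction ``$\Leftarrow$'' is trivial, so only ``$\Rightarrow$'' needs work.

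\emph{Reduction to a kernel identity.} Each $B_{\alpha,m}^{(d)}$ is an integral operator with nonnegative kernel
\[
k_\alpha(z,w)=\frac{|K_{\alpha,m}^{(d)}(w,z)|^2}{K_{\alpha,m}^{(d)}(z,z)}\,\rho_\alpha(w),
\qquad \rho_\alpha(w)=\frac{\Gamma(d)\,\alpha^{qd}}{\pi^d q\Gamma(qd)}\,e^{-\alpha|w|^m}.
\]
So $B_{\alpha,m}^{(d)}B_{\beta,m}^{(d)}$ has kernel $k_{\alpha,\beta}(z,w)=\int_{\C^d}k_\alpha(z,u)\,k_\beta(u,w)\,\mathrm{d}V(u)$. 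By Fubini and duality between $L^\infty$ and $L^1$, equality of the two products on $L^\infty(\C^d)$ forces $k_{\alpha,\beta}(z,\cdot)=k_{\eta,\gamma}(z,\cdot)$ a.e.\ for every $z$. We specialize to $z=0$. Since $K_{\alpha,m}^{(d)}(\cdot,0)\equiv1$, we get $k_\alpha(0,u)=\rho_\alpha(u)$, which gives the identity
\[
\Phi_{\alpha,\beta}(w):=\int_{\C^d}\rho_\alpha(u)\,\frac{|K_{\beta,m}^{(d)}(w,u)|^2}{K_{\beta,m}^{(d)}(u,u)}\,\mathrm{d}V(u)\,\rho_\beta(w)=\Phi_{\eta,\gamma}(w).
\]
Both sides are radial by unitary invariance, so this is an identity between two functions of $R=|w|$. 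We also use the scaling relation $K_{\beta,m}^{(d)}(w,u)=K_{1,m}^{(d)}(\beta^{1/m}w,\beta^{1/m}u)$, so everything is expressed through the single kernel $K_{1,m}^{(d)}$. Its diagonal is an explicit Mittag-Leffler type series with known asymptotics (Bommier-Hato--Youssfi). Off the diagonal, $|K_{1,m}^{(d)}(w,u)|^2$ is concentrated near $u\parallel w$ and behaves like $\exp\bigl(2|w|^{m/2}|u|^{m/2}\bigr)$ times an explicit power.

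\emph{Asymptotics as $R\to\infty$.} Write $s=|u|^{m/2}$ and $S=R^{m/2}$. The Laplace-method exponent is $-\alpha s^2-\beta(S-s)^2$, which is minimized at $s=\beta S/(\alpha+\beta)$ with value $-\frac{\alpha\beta}{\alpha+\beta}S^2$. The leading exponential decay of $\Phi_{\alpha,\beta}$ is therefore $e^{-\frac{\alpha\beta}{\alpha+\beta}R^m}$. Comparing this with $\Phi_{\eta,\gamma}$ already gives $\frac{1}{\alpha}+\frac{1}{\beta}=\frac1\eta+\frac1\gamma$, which is exactly the invariant in \eqref{eq:Gaussian-composition}. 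The second step is to extract the power-law prefactor $C\,R^{\kappa}$. This needs the Gaussian integral in the radial variable $s$, the transverse angular integral (which contributes a power of $R$ depending on $d$ and $m$), and the precise polynomial factor in the asymptotics of $K_{1,m}^{(d)}(u,u)$ evaluated at $|u|^{m/2}=\beta S/(\alpha+\beta)$. That last factor is the only place where $\alpha$ and $\beta$ enter asymmetrically. The polynomial factor in the diagonal asymptotics is a power of $|u|$ whose exponent vanishes exactly when $m=2$. So for $m\neq2$ the prefactor will contain a factor such as $\bigl(\beta/(\alpha+\beta)\bigr)^{c(m,d)}$ with $c(m,d)\neq0$, in addition to explicit powers of $\alpha,\beta$ coming from normalization. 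Matching these constants, together with the constraint on $\frac1\alpha+\frac1\beta$, should give two independent equations in $(\eta,\gamma)$ whose only solution is $(\eta,\gamma)=(\alpha,\beta)$.

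\emph{Main obstacle and fallback.} The main obstacle is carrying out this asymptotic analysis rigorously. One needs uniform off-diagonal asymptotics of $K_{1,m}^{(d)}$ (or at least of its angular average) in the regime $|u|\asymp|w|\to\infty$. One must also verify that the resulting constant genuinely separates the ordered pair, that is, that the relevant exponent $c(m,d)$ is nonzero for every $m\neq2$, including $m<2$ and non-integer $m$. If this proves unwieldy, I would first try a cleaner exact route: take Mellin moments $\int|w|^{2n}\Phi_{\alpha,\beta}(w)\,\mathrm{d}V(w)$. Expanding $|K_{\beta,m}^{(d)}(w,u)|^2$ in monomials and using angular orthogonality, the $w$-integral becomes explicit Gamma factors. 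This reduces the identity to equality, for all $n$, of explicit functions of the form $\int_0^\infty r^{a_n}e^{-\alpha r^m}\,\phi_n(\beta^{1/m}r)\,\mathrm{d}r$. Separating $(\alpha,\beta)$ would then be read off from the $n$-dependence of these moments.
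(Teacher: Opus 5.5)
There is a genuine gap: the proposal stops exactly where the proof has to happen. Your reduction to the radial kernel $k_{\alpha,\beta}(0,\cdot)$ is fine, and your heuristic points the right way. One expects $k_{\alpha,\beta}(0,w)\sim C\,e^{-s|w|^m}$ with $s=\frac{\alpha\beta}{\alpha+\beta}$, no power of $|w|$, and $C$ a fixed multiple of $s^{qd}\bigl(\beta/(\alpha+\beta)\bigr)^{d(q-1)}$. This is consistent with the endpoint asymptotics the paper proves.

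Both invariants you want to compare rest on a Laplace-method evaluation of $\int_{\C^d}\rho_\alpha(u)\,|K_{\beta,m}^{(d)}(w,u)|^2/K_{\beta,m}^{(d)}(u,u)\,\mathrm{d}V(u)$. That evaluation needs one of two inputs, with error control sharp enough to identify the constant and not merely the exponential rate:
\begin{itemize}
\item asymptotics of $F$ at the complex arguments $\beta^q\langle w,u\rangle$, uniformly in the direction of $u$ and for $|u|\asymp|w|$ (for $m<1$ several exponentials compete off the real axis); or
\item after averaging over directions, asymptotics of $\sum_k\frac{(d)_k}{k!\,\Gamma(qk+qd)^2}(\beta^{2q}|w|^2|u|^2)^k$.
\end{itemize}
None of this is supplied.

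The separation step is also unsupported. Your reason for $c(m,d)\neq0$ is that the diagonal factor is the only place where $\alpha$ and $\beta$ enter asymmetrically, and that is not correct. The volume element $|u|^{2d-1}\,\mathrm{d}|u|$ and the transverse width of $|K_{\beta,m}^{(d)}(w,u)|^2$ are also evaluated at the asymmetric saddle $|u|^{m/2}=\beta S/(\alpha+\beta)$. At $m=2$ these asymmetric pieces combine into a multiple of the symmetric quantity $s^d$, so only the full computation can decide whether a nonzero exponent survives for $m\neq2$. The Mellin-moment fallback, with moments of $|w|^{2n}$ and unspecified $\phi_n$, likewise gives no mechanism for separating $(\alpha,\beta)$.

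The idea that closes the gap is to avoid off-diagonal asymptotics altogether by pairing your kernel identity with $f_\delta(w)=e^{-\delta|w|^m}$. That is, take the Laplace transform of $k_{\alpha,\beta}(0,\cdot)$ in the variable $|w|^m$; this is also the right way to package moments, using powers of $|w|^m$ rather than $|w|^2$.
\begin{itemize}
\item Since $e^{-\delta|w|^m}\,\mathrm{d}\mu_{\beta,m}^{(d)}$ is a constant multiple of $\mathrm{d}\mu_{\beta+\delta,m}^{(d)}$, and $K_{\beta,m}^{(d)}(w,u)=K_{\beta+\delta,m}^{(d)}\bigl(w,(\beta/(\beta+\delta))^q u\bigr)$, the reproducing property evaluates the $w$-integral exactly (Proposition~\ref{prop:test-transform}).
\item What remains, up to explicit factors, is $\int_{\C^d}e^{-\alpha|z|^m}F(c_\delta|z|^2)/F(\beta^q|z|^2)\,\mathrm{d}V(z)$, which needs the asymptotics of $F$ only on the positive real axis.
\item Your decay rate reappears as the left endpoint $-s$ of the interval to which $\Phi_{\alpha,\beta}^{(d)}(\delta)=\int k_{\alpha,\beta}(0,w)e^{-\delta|w|^m}\,\mathrm{d}V(w)$ continues real-analytically, via holomorphy in $\lambda=c_\delta$ on $|\lambda|<(\alpha+\beta)^q$.
\item Your constant reappears as $L_{\alpha,\beta}^{(d)}$ in $\Phi_{\alpha,\beta}^{(d)}(\delta)\sim L_{\alpha,\beta}^{(d)}(\delta+s)^{-qd}$.
\end{itemize}
The explicit form $L_{\alpha,\beta}^{(d)}=s^{qd}(1+\alpha/\beta)^{d(1-q)}$ then gives injectivity in $\alpha/\beta$ when $q\neq1$, which is exactly the separation you were after.
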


Thus, different parameter pairs may yield the same product when $m=2$, while the situation is different when $m\ne2$.
The proof of Theorem~\ref{thm:main} shows how $\alpha$ and $\beta$ can be obtained from the product $B_{\alpha,m}^{(d)}B_{\beta,m}^{(d)}$. Applying the operator identity to the test functions $f_\delta(z)=e^{-\delta|z|^m}$ and evaluating at the origin gives
$
\Phi_{\alpha,\beta}^{(d)}(\delta)
=
\Phi_{\eta,\gamma}^{(d)}(\delta),
~\delta>0$,
where $\Phi_{\alpha,\beta}^{(d)}$ is given by \eqref{eq:Phi-integral}. 
The difficulty is 
that the identity on $\delta>0$ does not directly determine $\alpha$ and $\beta$ separately.
We therefore extend $\Phi_{\alpha,\beta}^{(d)}$ real analytically to
$
\left(-\frac{\alpha\beta}{\alpha+\beta},\infty\right)
$
and study its behavior at the left endpoint; see Theorem~\ref{prop:critical-signature}.

With the help of \eqref{eq:Gaussian-composition},
Theorem~\ref{thm:main} provides a complete characterization
for all $m>0$.

\begin{corollary}\label{cor:classification}
Let $d\geq1$, $m>0$, and let $\alpha,\beta,\eta,\gamma>0$. Then
$$
B_{\alpha,m}^{(d)}B_{\beta,m}^{(d)}
=
B_{\eta,m}^{(d)}B_{\gamma,m}^{(d)}
\iff
\begin{cases}
\displaystyle
\frac{1}{\alpha}+\frac{1}{\beta}
=
\frac{1}{\eta}+\frac{1}{\gamma},
& m=2,\\
(\alpha,\beta)=(\eta,\gamma),
& m\ne2.
\end{cases}
$$
\end{corollary}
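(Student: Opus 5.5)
The corollary follows by splitting into the two cases $m=2$ and $m\neq2$. For $m\neq 2$ it is exactly Theorem~\ref{thm:main}, so nothing remains to prove there. All the work is in the case $m=2$, where we use the semigroup identity \eqref{eq:Gaussian-composition}. It gives
$$
B_{\alpha,2}^{(d)}B_{\beta,2}^{(d)}=Q_{\frac{1}{4\alpha}+\frac{1}{4\beta}},\qquad
B_{\eta,2}^{(d)}B_{\gamma,2}^{(d)}=Q_{\frac{1}{4\eta}+\frac{1}{4\gamma}}.
$$
So the claim reduces to showing that $Q_t=Q_s$ as operators on $L^\infty(\C^d)$ if and only if $t=s$.

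The direction ``$\Leftarrow$'' is immediate. If $\frac1\alpha+\frac1\beta=\frac1\eta+\frac1\gamma$, the two heat operators have the same time parameter and therefore coincide.

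For ``$\Rightarrow$'' we need a test function in $L^\infty$ that separates different times. A natural choice is $f(z)=e^{-|z|^2}$, evaluated at the origin. A Gaussian integral gives
$$
(Q_tf)(0)=\frac{1}{(4\pi t)^d}\int_{\C^d}e^{-|z|^2\left(1+\frac1{4t}\right)}\,\mathrm dV(z)=\frac{1}{(4\pi t)^d}\Bigl(\frac{\pi}{1+\frac{1}{4t}}\Bigr)^d=(1+4t)^{-d}.
$$
This function of $t$ is strictly decreasing on $(0,\infty)$. Hence $Q_t=Q_s$ forces $t=s$, which means $\frac1\alpha+\frac1\beta=\frac1\eta+\frac1\gamma$. (Alternatively, one could apply the operators to $f(z)=\cos(\operatorname{Re} z_1)$, for which $Q_tf=e^{-t}f$.)

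No step is a real obstacle, since Theorem~\ref{thm:main} carries all the weight. The only point needing care is the injectivity of $t\mapsto Q_t$ in the Gaussian case, and the explicit computation above settles it.
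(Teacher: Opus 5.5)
Your proof is correct and takes essentially the same route as the paper. You reduce the case $m\neq2$ to Theorem~\ref{thm:main}, use \eqref{eq:Gaussian-composition} for $m=2$, and separate the time parameters by evaluating at the origin on a Gaussian test function. The paper phrases that last step via $(B_{s,2}^{(d)}f_\delta)(0)=\bigl(s/(s+\delta)\bigr)^d$ from Proposition~\ref{prop:test-transform}, which is exactly your computation $(1+4t)^{-d}$ with $t=1/(4s)$ and $\delta=1$.
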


We then turn to the commutativity problem discussed above.
Corollary
\ref{cor:commutativity} 
covers all values of 
the exponent $m>0$ 
in the weight 
$e^{-\alpha|z|^m}$, 
including the remaining cases
$m=4,6,\ldots$ not covered 
in \cite{BFY}. 
It also extends the 
one-variable result to 
the several-variable setting and thus gives a complete answer to Question~2.

\begin{corollary}\label{cor:commutativity}
Let $d\geq1$, $m>0$, and $\alpha,\beta>0$ with $\alpha\neq\beta$. Then
$$
B_{\alpha,m}^{(d)}B_{\beta,m}^{(d)}
=
B_{\beta,m}^{(d)}B_{\alpha,m}^{(d)}
\quad\Longleftrightarrow\quad
m=2.
$$
\end{corollary}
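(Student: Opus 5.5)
The plan is to deduce the corollary from Theorem~\ref{thm:main} for $m\neq2$ and from \eqref{eq:Gaussian-composition} for $m=2$, treating the two directions separately.

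\emph{The direction $m=2\Rightarrow$ commutativity.} This is immediate from \eqref{eq:Gaussian-composition}. For every $\alpha,\beta>0$ it gives
\[
B_{\alpha,2}^{(d)}B_{\beta,2}^{(d)}
=B_{1/(\frac1\alpha+\frac1\beta),2}^{(d)}
=B_{\beta,2}^{(d)}B_{\alpha,2}^{(d)},
\]
because the combined parameter $1/(\frac1\alpha+\frac1\beta)$ is symmetric in $\alpha$ and $\beta$. Equivalently, in the $m=2$ branch of Corollary~\ref{cor:classification}, take $(\eta,\gamma)=(\beta,\alpha)$; the condition $\frac1\alpha+\frac1\beta=\frac1\eta+\frac1\gamma$ then holds trivially.

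\emph{The direction commutativity $\Rightarrow m=2$.} We argue by contraposition. Suppose $m\neq2$ and $B_{\alpha,m}^{(d)}B_{\beta,m}^{(d)}=B_{\beta,m}^{(d)}B_{\alpha,m}^{(d)}$. Apply Theorem~\ref{thm:main} with $(\eta,\gamma)=(\beta,\alpha)$. It gives $(\alpha,\beta)=(\beta,\alpha)$, that is, $\alpha=\beta$. This contradicts the hypothesis $\alpha\neq\beta$, so commutativity forces $m=2$.

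There is no real obstacle here: the corollary is a direct specialization of Theorem~\ref{thm:main}, with the case $m=2$ handled by the explicit heat-semigroup identity \eqref{eq:Gaussian-composition}. All of the analytic difficulty, namely the analytic continuation of $\Phi_{\alpha,\beta}^{(d)}$ and its behaviour at the endpoint $-\frac{\alpha\beta}{\alpha+\beta}$, is contained in the proof of Theorem~\ref{thm:main}, which we take as given.
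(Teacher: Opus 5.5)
Your proposal is correct and matches the paper's proof, which deduces the corollary directly from Corollary~\ref{cor:classification} (that is, Theorem~\ref{thm:main} with $(\eta,\gamma)=(\beta,\alpha)$ for $m\neq2$, and \eqref{eq:Gaussian-composition} for $m=2$) together with $\alpha\neq\beta$. Your write-up simply spells out the two directions that the paper leaves implicit.
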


Corollary~\ref{cor:closure}
shows that 
the product of two Berezin transforms 
is again a Berezin transform 
if and only if $m=2$.

\begin{corollary}\label{cor:closure}
Let $d\geq1$, $m>0$, and $\alpha,\beta>0$. There exists $\theta>0$ such that
$B_{\alpha,m}^{(d)}B_{\beta,m}^{(d)}=B_{\theta,m}^{(d)}$
if and only if $m=2$. Consequently,
$\{B_{\alpha,m}^{(d)}:\alpha>0\}$ is closed under products if and
only if $m=2$.
\end{corollary}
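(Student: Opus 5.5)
Both directions rest on Theorem~\ref{thm:main} together with the Gaussian composition law \eqref{eq:Gaussian-composition}.

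\emph{The case $m=2$.} By \eqref{eq:Gaussian-composition}, for any $\alpha,\beta>0$ we have
$$
B_{\alpha,2}^{(d)}B_{\beta,2}^{(d)}=B_{\theta,2}^{(d)},
\qquad \theta=\frac{1}{\frac1\alpha+\frac1\beta}=\frac{\alpha\beta}{\alpha+\beta}>0 .
$$
Hence a suitable $\theta$ exists for every pair, and the family $\{B_{\alpha,2}^{(d)}:\alpha>0\}$ is closed under products.

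\emph{The case $m\neq2$.} Suppose, for contradiction, that $B_{\alpha,m}^{(d)}B_{\beta,m}^{(d)}=B_{\theta,m}^{(d)}$ for some $\theta>0$. Theorem~\ref{thm:main} only compares products of two transforms, so I first rewrite the right side as such a product.

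The natural move is to split $B_{\theta,m}^{(d)}$ itself as $B_{\eta,m}^{(d)}B_{\gamma,m}^{(d)}$, but for $m\ne2$ there is no such splitting available, so instead I multiply the identity by one extra factor. Applying $B_{\theta,m}^{(d)}$ on the left gives
$$
B_{\theta,m}^{(d)}B_{\alpha,m}^{(d)}B_{\beta,m}^{(d)}=B_{\theta,m}^{(d)}B_{\theta,m}^{(d)}.
$$
This is now a triple product on the left, which Theorem~\ref{thm:main} does not cover. It is better to apply $B_{\beta,m}^{(d)}$ on the right instead:
$$
B_{\alpha,m}^{(d)}B_{\beta,m}^{(d)}B_{\beta,m}^{(d)}=B_{\theta,m}^{(d)}B_{\beta,m}^{(d)}.
$$
This has the same defect. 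The remedy is to substitute the hypothesis into itself rather than only multiply by a factor, as follows.

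Composing the hypothesis with $B_{\theta,m}^{(d)}$ on the left and using the hypothesis again on the right side gives
$$
B_{\theta,m}^{(d)}B_{\theta,m}^{(d)}=B_{\theta,m}^{(d)}B_{\alpha,m}^{(d)}B_{\beta,m}^{(d)}=\bigl(B_{\theta,m}^{(d)}B_{\alpha,m}^{(d)}\bigr)B_{\beta,m}^{(d)}.
$$
This still yields a triple product. The cleanest route avoids such products entirely by reproducing the mechanism of the proof of Theorem~\ref{thm:main} with one factor replaced by the identity.

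\emph{The key step.} The proof of Theorem~\ref{thm:main} proceeds as follows.
\begin{enumerate}
\item Test the operator identity on $f_\delta(z)=e^{-\delta|z|^m}$ and evaluate at $0$. This gives $\Phi_{\alpha,\beta}^{(d)}(\delta)=\Phi_{\eta,\gamma}^{(d)}(\delta)$ for $\delta>0$.
\item Extend both sides real analytically. The function $\Phi_{\alpha,\beta}^{(d)}$ extends to $\bigl(-\frac{\alpha\beta}{\alpha+\beta},\infty\bigr)$, with a specific singular signature at the left endpoint (Theorem~\ref{prop:critical-signature}).
\end{enumerate}

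For the single transform, the analogous quantity is
$$
\Psi_\theta(\delta):=(B_{\theta,m}^{(d)}f_\delta)(0)=\int_{\C^d} e^{-\delta|w|^m}\,\mathrm{d}\mu_{\theta,m}^{(d)}(w)=\Bigl(\frac{\theta}{\theta+\delta}\Bigr)^{qd},
$$
since $K_{\theta,m}^{(d)}(w,0)$ is constant. This function is elementary and extends analytically to $(-\theta,\infty)$, with a pure power singularity at $-\theta$.

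\emph{Comparing singularities.} The identity $\Phi_{\alpha,\beta}^{(d)}=\Psi_\theta$ on $\delta>0$ extends by analyticity. The two domains of analyticity must then match, which forces $\theta=\frac{\alpha\beta}{\alpha+\beta}$. The contradiction must come from comparing the endpoint behaviour: $\Psi_\theta$ has the pure power blow-up $(\theta+\delta)^{-qd}$, whereas Theorem~\ref{prop:critical-signature} describes the behaviour of $\Phi_{\alpha,\beta}^{(d)}$ at the endpoint. The main obstacle is checking that this signature is incompatible with a pure power $(\theta+\delta)^{-qd}$ when $m\ne2$. I expect this to be a different exponent or a non-vanishing extra term depending on $q\ne1$, and I would read it off directly from the statement of Theorem~\ref{prop:critical-signature}.

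Once the single-product case is settled, the family $\{B_{\alpha,m}^{(d)}\}$ is not closed under products for $m\ne2$, since already $B_{\alpha,m}^{(d)}B_{\alpha,m}^{(d)}$ fails to be a Berezin transform. This proves the consequence.
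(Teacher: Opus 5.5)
\textbf{Verdict: the approach matches the paper's, but the decisive step for $m\neq2$ is left undone.} Your final route is the one the paper uses: test on $f_\delta$ at the origin, compare $\Phi_{\alpha,\beta}^{(d)}$ with $\Psi_\theta(\delta)=\bigl(\frac{\theta}{\theta+\delta}\bigr)^{qd}$, and analyse the left endpoint. However, you stop at exactly the step that proves the result, and your guess about what happens there is wrong.

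Theorem~\ref{prop:critical-signature} gives $\Phi_{\alpha,\beta}^{(d)}(\delta)\sim L_{\alpha,\beta}^{(d)}(\delta+s)^{-qd}$ with $s=\frac{\alpha\beta}{\alpha+\beta}$. This has the \emph{same} exponent $-qd$ as $\Psi_\theta$, and there is no extra term to exploit. Once $\theta=s$ is established, the only possible discrepancy is in the leading constant. Multiply the identity $\Phi_{\alpha,\beta}^{(d)}(\delta)=\bigl(\frac{s}{s+\delta}\bigr)^{qd}$ by $(\delta+s)^{qd}$ and let $\delta\to-s^+$; this forces $L_{\alpha,\beta}^{(d)}=s^{qd}$. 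A direct computation from the formula for $L_{\alpha,\beta}^{(d)}$ gives
$L_{\alpha,\beta}^{(d)}/s^{qd}=\beta^{d(q-1)}(\alpha+\beta)^{d(1-q)}=(1+\alpha/\beta)^{d(1-q)}$. This is $\neq1$ because $\alpha/\beta>0$ and $q\neq1$; this is \eqref{eq:L-s-r} in the paper.

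This computation is the only place where $m\neq2$ enters. As written, no verified step of your argument uses $m\neq2$, which is a red flag. At $m=2$ one has $L_{\alpha,\beta}^{(d)}=s^{d}$, so the singularities match exactly and the conclusion is false.

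Two smaller points.
\begin{enumerate}
\item ``The two domains of analyticity must match'' is not an argument by itself. Make it rigorous by comparing blow-ups on the common interval $(\max\{-s,-\theta\},\infty)$, where the identity holds by real-analytic continuation. If $s<\theta$, then $\Phi_{\alpha,\beta}^{(d)}\to+\infty$ as $\delta\to-s^+$, while $\Psi_\theta$ tends to the finite value $\bigl(\frac{\theta}{\theta-s}\bigr)^{qd}$. If $s>\theta$, then $\Psi_\theta\to+\infty$ as $\delta\to-\theta^+$, a point where $\Phi_{\alpha,\beta}^{(d)}$ is analytic.
\item The opening paragraphs that try to reduce to Theorem~\ref{thm:main} through triple products are dead ends and should be deleted. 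The $m\neq2$ direction uses Theorem~\ref{prop:critical-signature} directly, not Theorem~\ref{thm:main}.
\end{enumerate}
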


The rest of the paper is organized as follows.
Section~2 collects the preliminaries and
derives the integral representation for 
the product of 
two Berezin transforms.
Section~3 proves
Theorem \ref{thm:main} and its corollaries.

\section{Preliminaries}

In this section, we recall the reproducing kernel in terms of the Prabhakar function, compute the Berezin transform of the radial test functions $f_\delta(z)=e^{-\delta|z|^m}$, and derive an integral representation for the product of two Berezin transforms.

The Prabhakar function, also known as
the three-parameter Mittag--Leffler function, is defined by
$$
 E_{a,b}^{c}(z)
 =
 \sum_{k=0}^\infty
 \frac{(c)_k}{k!\Gamma(ak+b)}z^k,
 ~z\in\mathbb C,
 ~a>0,~b,c\in\C,$$
where
$$
(c)_0=1,\qquad
(c)_k=\prod_{j=0}^{k-1}(c+j),\quad k\geq1.
$$
The Prabhakar function was introduced in connection with a singular integral equation and is now widely used in fractional calculus; see \cite{Prabhakar,GarraGarrappa,GiustiEtAl}.
By \cite[formula~(17)]{BommierHatoEnglisYoussfi2012},
we obtain
\begin{equation}\label{eq:kernel}
 K_{\alpha,m}^{(d)}(z,w)
=\Gamma(qd)E_{q,qd}^{d}\bigl(\alpha^q\langle z,w\rangle\bigr).
\end{equation}
For later use, set
$F(x)=E_{q,qd}^{d}(x)$.
For $\delta>0$, define
$f_\delta(z)=e^{-\delta|z|^m}$.

\begin{proposition}\label{prop:test-transform}
For $\alpha,\delta>0$ and $z\in\C^d$,
\begin{equation}\label{eq:test-transform}
 (B_{\alpha,m}^{(d)}f_\delta)(z)
 =
 \left(\frac{\alpha}{\alpha+\delta}\right)^{qd}
 \frac{
 F\bigl(\alpha^{2q}(\alpha+\delta)^{-q}|z|^2\bigr)
 }{
 F(\alpha^q|z|^2)
 }.
\end{equation}
\end{proposition}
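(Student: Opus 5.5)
The plan is to insert the kernel formula \eqref{eq:kernel} into the definition \eqref{eq:Berezin} and observe that the integrand becomes, up to normalization, the squared modulus of a reproducing kernel for a space with a \emph{different} weight parameter. Concretely, with $F(x)=E_{q,qd}^{d}(x)$ we have $K_{\alpha,m}^{(d)}(w,z)=\Gamma(qd)F(\alpha^q\langle w,z\rangle)$ and $K_{\alpha,m}^{(d)}(z,z)=\Gamma(qd)F(\alpha^q|z|^2)$. Hence
\begin{equation*}
(B_{\alpha,m}^{(d)}f_\delta)(z)=\frac{\Gamma(qd)}{F(\alpha^q|z|^2)}\int_{\C^d}\bigl|F(\alpha^q\langle w,z\rangle)\bigr|^2 e^{-\delta|w|^m}\,\mathrm d\mu_{\alpha,m}^{(d)}(w).
\end{equation*}
Write $\lambda=\alpha+\delta$. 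Then $e^{-\delta|w|^m}\,\mathrm d\mu_{\alpha,m}^{(d)}(w)=(\alpha/\lambda)^{qd}\,\mathrm d\mu_{\lambda,m}^{(d)}(w)$ by \eqref{eq:measure}. This explains the prefactor $(\alpha/(\alpha+\delta))^{qd}$.

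Next, rewrite $F(\alpha^q\langle w,z\rangle)$ as a kernel of $F^2_{\lambda,m}(\C^d)$. Since $F(\alpha^q\langle w,z\rangle)=F(\lambda^q\langle w,\zeta\rangle)$ with $\zeta=(\alpha/\lambda)^q z$, we get $\Gamma(qd)F(\alpha^q\langle w,z\rangle)=K_{\lambda,m}^{(d)}(w,\zeta)$. The reproducing property in $F^2_{\lambda,m}(\C^d)$ then gives
\begin{equation*}
\int_{\C^d}|K_{\lambda,m}^{(d)}(w,\zeta)|^2\,\mathrm d\mu_{\lambda,m}^{(d)}(w)=K_{\lambda,m}^{(d)}(\zeta,\zeta)=\Gamma(qd)F(\lambda^q|\zeta|^2).
\end{equation*}
Here $\lambda^q|\zeta|^2=\lambda^q\alpha^{2q}\lambda^{-2q}|z|^2=\alpha^{2q}(\alpha+\delta)^{-q}|z|^2$. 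Dividing by $\Gamma(qd)^2$ and collecting factors then yields exactly \eqref{eq:test-transform}.

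The only step needing care is justifying the use of the reproducing identity. One has to know that \eqref{eq:kernel} is valid for every $\alpha>0$, which is how it is quoted, so it applies with $\lambda$ in place of $\alpha$. One also needs that $\mu_{\lambda,m}^{(d)}$ is normalized as in \eqref{eq:measure}, so that the same constant relates $\mu_{\alpha,m}^{(d)}$ and $\mu_{\lambda,m}^{(d)}$; the ratio of densities is then $(\alpha/\lambda)^{qd}e^{-\delta|w|^m}$. The remaining ingredient is that $K_{\lambda,m}^{(d)}(\cdot,\zeta)\in F^2_{\lambda,m}(\C^d)$, which is standard.

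Alternatively, one could verify the formula directly. This would mean expanding $|F(\alpha^q\langle w,z\rangle)|^2$ in the monomial orthogonal basis and integrating in polar coordinates using $\int_0^\infty r^{2k+2d-1}e^{-\lambda r^m}\,\mathrm dr=\Gamma(q(k+d))/(m\lambda^{q(k+d)})$. I expect the kernel-rescaling argument to be cleaner, and the bookkeeping of the constants to be the main potential pitfall.
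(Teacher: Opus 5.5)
Your proposal is correct and follows essentially the same route as the paper's proof. In both, $e^{-\delta|w|^m}$ is absorbed into $\mu_{\alpha+\delta,m}^{(d)}$ with the factor $\bigl(\alpha/(\alpha+\delta)\bigr)^{qd}$, $K_{\alpha,m}^{(d)}(w,z)$ is rewritten as $K_{\alpha+\delta,m}^{(d)}(w,\zeta)$ with $\zeta=\bigl(\alpha/(\alpha+\delta)\bigr)^{q}z$, and the reproducing property is applied; your constant bookkeeping checks out.
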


\begin{proof}
Put $A=\alpha+\delta$. By \eqref{eq:measure},
$$
e^{-\delta|w|^m}\,\mathrm{d}\mu_{\alpha,m}^{(d)}(w)
=
\left(\frac{\alpha}{A}\right)^{qd}
\mathrm{d}\mu_{A,m}^{(d)}(w).
$$
Moreover, by \eqref{eq:kernel},
$$
K_{\alpha,m}^{(d)}(w,z)
=
K_{A,m}^{(d)}
\left(w,\left(\frac{\alpha}{A}\right)^qz\right).
$$
Therefore, by \eqref{eq:Berezin} and the reproducing property,
$$
\begin{aligned}
(B_{\alpha,m}^{(d)}f_\delta)(z)
&=
\frac{1}{K_{\alpha,m}^{(d)}(z,z)}
\int_{\C^d}
e^{-\delta|w|^m}
|K_{\alpha,m}^{(d)}(w,z)|^2
\,\mathrm{d}\mu_{\alpha,m}^{(d)}(w)
\\
&=
\left(\frac{\alpha}{A}\right)^{qd}
\frac{1}{K_{\alpha,m}^{(d)}(z,z)}
\int_{\C^d}
\left|
K_{A,m}^{(d)}
\left(w,\left(\frac{\alpha}{A}\right)^qz\right)
\right|^2
\,\mathrm{d}\mu_{A,m}^{(d)}(w)
\\
&=
\left(\frac{\alpha}{A}\right)^{qd}
\frac{
K_{A,m}^{(d)}
\left(
\left(\frac{\alpha}{A}\right)^qz,
\left(\frac{\alpha}{A}\right)^qz
\right)
}{
K_{\alpha,m}^{(d)}(z,z)
}
\\
&=
\left(\frac{\alpha}{A}\right)^{qd}
\frac{
F\bigl(\alpha^{2q}A^{-q}|z|^2\bigr)
}{
F\bigl(\alpha^q|z|^2\bigr)
}.
\end{aligned}
$$
Substituting $A=\alpha+\delta$ yields \eqref{eq:test-transform}.
\end{proof}

Since $K_{\alpha,m}^{(d)}(w,0)=1$, the formula \eqref{eq:Berezin} gives
\begin{equation}\label{eq:origin}
 (B_{\alpha,m}^{(d)}f)(0)
 =
 \frac{\Gamma(d)}{\pi^d q\Gamma(qd)}
 \alpha^{qd}
 \int_{\C^d}f(w)e^{-\alpha|w|^m}\,\mathrm{d}V(w).
\end{equation}
For $\alpha,\beta>0$, define
$$
\Phi_{\alpha,\beta}^{(d)}(\delta)
=
\bigl(
B_{\alpha,m}^{(d)}
B_{\beta,m}^{(d)}f_\delta
\bigr)(0),
\quad \delta>0.
$$
Set $A_\delta=\beta+\delta$ and $c_\delta=\beta^{2q}A_\delta^{-q}$.
Then \eqref{eq:origin} and Proposition~\ref{prop:test-transform} give
\begin{equation}\label{eq:Phi-integral}
\Phi_{\alpha,\beta}^{(d)}(\delta)
=
\frac{\Gamma(d)}{\pi^d q\Gamma(qd)}
\alpha^{qd}
\left(\frac{\beta}{A_\delta}\right)^{qd}
\int_{\C^d}
e^{-\alpha|z|^m}
\frac{F(c_\delta|z|^2)}{F(\beta^q|z|^2)}
\,\mathrm{d}V(z).
\end{equation}

\section{Proofs of the main results}

In this section, we prove
Theorem~\ref{thm:main}
and its
corollaries.

\subsection{Real-analytic continuation and endpoint asymptotics}\hfill

The following theorem plays a key role in the proof of Theorem~\ref{thm:main}.
Recall that a function $g$ is real analytic on an open interval $I$
if for every $x_0\in I$, there exist $r>0$ and coefficients
$a_n\in\mathbb{R}$, $n\geq0$, such that
$(x_0-r,x_0+r)\subset I$ and the series
$\sum_{n=0}^{\infty}a_n(x-x_0)^n$
converges to $g(x)$ for $|x-x_0|<r$.

\begin{theorem}\label{prop:critical-signature}
Let $\delta_{\alpha,\beta}
=-\frac{\alpha\beta}{\alpha+\beta}$.
The function $\Phi_{\alpha,\beta}^{(d)}$ has a real-analytic extension to
$(\delta_{\alpha,\beta},\infty)$. Moreover,
as
$\delta\to\delta_{\alpha,\beta}^{+}$,
\begin{equation}\label{eq:Phi-asymptotic}
 \Phi_{\alpha,\beta}^{(d)}(\delta)
 \sim
 L_{\alpha,\beta}^{(d)}
 (\delta-\delta_{\alpha,\beta})^{-qd},
\end{equation}
where
$L_{\alpha,\beta}^{(d)}
 =
 \beta^{d(2q-1)}\alpha^{qd}
 (\alpha+\beta)^{d(1-2q)}$.
\end{theorem}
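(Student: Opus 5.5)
The plan is to show that the integral in \eqref{eq:Phi-integral} itself makes sense, and is analytic in $\delta$, on the whole interval $(\delta_{\alpha,\beta},\infty)$, and then to read off the blow-up from a Laplace-type integral in the radial variable. The basic input is the large-$x$ asymptotic of the Prabhakar function. I will use the standard expansion
\[
E_{a,b}^{c}(x)\sim \frac{1}{a^{c}\Gamma(c)}\,x^{(c-b)/a}e^{x^{1/a}},\qquad x\to+\infty ,
\]
which for $c=1$ reduces to the familiar Mittag--Leffler asymptotic. I would either quote it from the Prabhakar literature or derive it from the Mellin--Barnes/Hankel representation. For $F=E^{d}_{q,qd}$ this gives
\[
F(x)\sim \frac{x^{d(1-q)/q}e^{x^{m/2}}}{q^{d}\Gamma(d)} .
\]
Consequently, for $c>0$,
\[
\frac{F(c r^2)}{F(\beta^q r^2)}\sim \Bigl(\frac{c}{\beta^q}\Bigr)^{d(1-q)/q}\exp\bigl((c^{m/2}-\beta)r^m\bigr),\qquad r\to\infty .
\]
The key observation is that $c_\delta^{m/2}=c_\delta^{1/q}=\beta^2/A_\delta$. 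Hence the integrand of \eqref{eq:Phi-integral} behaves like $\exp(-\kappa(\delta)r^m)$ times a power, where
\[
\kappa(\delta)=\alpha+\beta-\frac{\beta^2}{\beta+\delta}.
\]
For $\beta+\delta>0$ one has $\kappa(\delta)>0$ exactly when $\delta>\delta_{\alpha,\beta}$, and $\delta_{\alpha,\beta}>-\beta$. This is where the endpoint comes from.

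\textbf{Step 1: analytic extension.} For $\delta$ in a small complex disc around a point $\delta_0>\delta_{\alpha,\beta}$, the quantities $A_\delta$ and $c_\delta=\beta^{2q}A_\delta^{-q}$ (principal branch) are holomorphic, so $F(c_\delta|z|^2)$ is holomorphic in $\delta$. Since $F$ has positive Taylor coefficients, $|F(c_\delta r^2)|\le F(|c_\delta|r^2)$. Moreover $|c_\delta|^{1/q}=\beta^2/|A_\delta|$ stays within any prescribed $\varepsilon$ of $\beta^2/A_{\delta_0}$ on a small enough disc. By the asymptotic of $F$, the integrand is therefore dominated by $C r^{N}e^{-(\kappa(\delta_0)-\varepsilon)r^m}$, which is integrable. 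Morera's theorem (or differentiation under the integral sign) then gives holomorphy on the disc. Restricting to real $\delta$ yields a real-analytic function on $(\delta_{\alpha,\beta},\infty)$ that agrees with $\Phi^{(d)}_{\alpha,\beta}$ for $\delta>0$.

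\textbf{Step 2: endpoint asymptotics.} Pass to polar coordinates, using $\mathrm{d}V=\frac{2\pi^d}{\Gamma(d)}r^{2d-1}\,\mathrm{d}r\,\mathrm{d}\sigma$ with normalized $\sigma$; the integrand is radial. Fix $\varepsilon>0$ and choose $R$ so that for $r>R$ the ratio $F(c_\delta r^2)/F(\beta^q r^2)$ lies within a factor $1\pm\varepsilon$ of its asymptotic form, uniformly for $\delta$ near $\delta_{\alpha,\beta}$. This uniformity holds because $c_\delta$ ranges over a compact subset of $(0,\infty)$. The contribution of $r<R$ stays bounded as $\delta\to\delta_{\alpha,\beta}^+$. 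The tail contributes
\[
(1\pm\varepsilon)\Bigl(\tfrac{c_\delta}{\beta^q}\Bigr)^{d(1-q)/q}\int_0^\infty r^{2d-1}e^{-\kappa r^m}\,\mathrm{d}r=(1\pm\varepsilon)\Bigl(\tfrac{c_\delta}{\beta^q}\Bigr)^{d(1-q)/q}\frac{\Gamma(qd)}{m\,\kappa^{qd}} ,
\]
up to an $O(1)$ error. Next, $\kappa(\delta)\sim\frac{(\alpha+\beta)^2}{\beta^2}(\delta-\delta_{\alpha,\beta})$, because $A_{\delta_{\alpha,\beta}}=\beta^2/(\alpha+\beta)$. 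Collecting constants, with $2/(qm)=1$, $\beta/A=(\alpha+\beta)/\beta$ and $(c/\beta^q)^{d(1-q)/q}=((\alpha+\beta)/\beta)^{d(1-q)}$, the total exponent of $(\alpha+\beta)/\beta$ is $qd+d(1-q)-2qd=d(1-2q)$. This gives exactly $L^{(d)}_{\alpha,\beta}(\delta-\delta_{\alpha,\beta})^{-qd}$; letting $\varepsilon\to0$ finishes the proof.

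The main obstacle is the first ingredient: a reliable form of the Prabhakar asymptotic with the exact constant $1/(q^d\Gamma(d))$ and exponent $d(1-q)/q$, holding uniformly in the argument along positive reals. If a citation is unavailable, I would derive it from the Hankel-contour representation of $1/\Gamma$. Alternatively, I would write $(d)_k/k!$ as a polynomial in $k$ and express $F$ as a finite combination of derivatives of Mittag--Leffler functions $E_{q,b}$, whose asymptotics are classical. Only the leading exponential and power are needed; in the limit the constant $q^d\Gamma(d)$ cancels in the ratio, and the power survives only through the factor $(c/\beta^q)^{d(1-q)/q}$.
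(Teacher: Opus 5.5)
Your proposal is correct and follows essentially the same route as the paper. Both use the same leading asymptotic $F(x)\sim x^{\sigma}e^{x^{1/q}}/(\Gamma(d)q^d)$, which the paper derives from the Garra--Garrappa expansions by treating $q<2$, $q=2$ and $q>2$ separately. Both then use a dominated-convergence/Morera argument for analyticity and split the integral at a radius $R$ with uniform asymptotics, which produces $\kappa(\delta)^{-qd}$ and the constant $L_{\alpha,\beta}^{(d)}$. The only cosmetic difference is that the paper proves $H_{\alpha,\beta}^{(d)}(\lambda)$ holomorphic on the whole disc $|\lambda|<(\alpha+\beta)^q$ and composes with the real-analytic map $\delta\mapsto c_\delta$, whereas you prove holomorphy directly in $\delta$ on small complex discs.
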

\noindent
The proof requires three lemmas.
The first provides the asymptotic estimates for the Prabhakar function.
The second gives the analytic continuation needed to reach the critical endpoint,
while the third gives the precise asymptotic behavior at the critical endpoint.

The asymptotic expansions in
\cite[Theorems~3--5]{GarraGarrappa}
yield the following estimates.

\begin{lemma}\label{lem:F-asymptotic}
Set
$\sigma=\frac{d(1-q)}{q}$.
Then
\begin{align}\label{s3e1}
F(x)
=
\frac{1}{\Gamma(d)q^d}
x^\sigma e^{x^{1/q}}(1+o(1))
\end{align}
as $x\to+\infty$.
Moreover, if $J=[\rho_0,\rho_1]$ is a compact subinterval of $(0,\infty)$, then
\begin{align}\label{s3e2}
F(cr^2)
=
\frac{1}{\Gamma(d)q^d}
c^\sigma r^{2\sigma}e^{c^{1/q}r^m}
(1+o(1))
\end{align}
as $r\to\infty$, uniformly for $c\in J$.
\end{lemma}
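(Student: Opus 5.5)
The plan is to derive \eqref{s3e1} directly from the general asymptotic expansions of the Prabhakar function in \cite[Theorems~3--5]{GarraGarrappa}. Then \eqref{s3e2} follows from \eqref{s3e1} by substitution. The uniformity in $c$ comes for free, because the error term in \eqref{s3e1} depends only on the size of the argument.

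\textbf{Step 1 (the leading exponential term).} For $E_{a,b}^{c}(x)$ with $x>0$ real, the dominant contribution in \cite{GarraGarrappa} comes from the pole at $s=x^{1/a}$ in the Laplace--inverse (Hankel contour) representation. Residue calculus at this pole of order $c$ gives a leading term of the form
$$
\frac{1}{\Gamma(c)\,a^{c}}\,x^{(c-b)/a}\,e^{x^{1/a}}\bigl(1+O(x^{-1/a})\bigr).
$$
I will specialize to $a=q$, $b=qd$, $c=d$. The exponent becomes $(d-qd)/q=\sigma$, and the constant becomes $1/(\Gamma(d)q^d)$, which is exactly the claimed main term in \eqref{s3e1}.

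\textbf{Step 2 (the remaining contributions are negligible).} Besides the main term, the expansions of \cite{GarraGarrappa} contain two kinds of extra terms.
\begin{itemize}
\item Algebraic terms of size $O(x^{-N})$. These are trivially $o\bigl(x^\sigma e^{x^{1/q}}\bigr)$.
\item When $q$ is large (equivalently $m$ small), additional exponential terms coming from the other admissible branches $s=x^{1/q}e^{2\pi i j/q}$ with $|2\pi j/q|<\pi$. Each has modulus $x^{\sigma}e^{x^{1/q}\cos(2\pi j/q)}$. Since $\cos(2\pi j/q)<1$ for $j\neq0$, these are exponentially smaller than the main term.
\end{itemize}

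I expect this case bookkeeping to be the main obstacle. The three cited theorems cover different ranges of the parameter $a=q$ (namely $0<a<2$, $a=2$, and $a>2$), so the hypotheses and the list of branches must be checked in each range. In every range, only the $j=0$ branch has maximal real part on the positive real axis. Dividing the full expansion by the main term therefore gives $1+o(1)$ as $x\to+\infty$, which proves \eqref{s3e1}.

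\textbf{Step 3 (uniform version).} Write \eqref{s3e1} as $F(x)=\frac{1}{\Gamma(d)q^d}x^\sigma e^{x^{1/q}}(1+\varepsilon(x))$, where $\varepsilon(x)\to0$ as $x\to+\infty$. Put $x=cr^2$ with $c\in J=[\rho_0,\rho_1]$. Then
$$
x^\sigma=c^\sigma r^{2\sigma},\qquad x^{1/q}=c^{1/q}r^{2/q}=c^{1/q}r^m,
$$
since $2/q=m$. Moreover $\sup_{c\in J}|\varepsilon(cr^2)|\le\sup_{y\ge\rho_0r^2}|\varepsilon(y)|$, and the right-hand side tends to $0$ as $r\to\infty$ because $\rho_0>0$. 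This yields \eqref{s3e2}, uniformly for $c\in J$.
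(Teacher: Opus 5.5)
Your proposal is correct and follows essentially the same route as the paper: you specialize the Garra--Garrappa expansions with $(a,b,c)=(q,qd,d)$, check in each of the ranges $0<q<2$, $q=2$, $q>2$ that every term other than the $\ell=0$ exponential is negligible, and then obtain the uniform version by bounding $\sup_{c\in J}|\varepsilon(cr^2)|$ by $\sup_{y\ge\rho_0 r^2}|\varepsilon(y)|$. The one small bookkeeping point is the case $q=2$. There the extra term $(-1)^d e^{-x^{1/2}}(\cdots)$ comes from the boundary branch $2\pi\ell/q=\pi$, which your condition $|2\pi j/q|<\pi$ excludes, but it is trivially negligible, as your ``maximal real part'' remark already anticipates.
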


\begin{proof}
Applying the asymptotic expansions in
\cite[Theorems~3--5]{GarraGarrappa}
with $(a,b,c)=(q,qd,d)$ and restricting to the positive real axis, we obtain
$$
F(x)\sim
\begin{cases}
\displaystyle
\frac{x^\sigma e^{x^{1/q}}}{\Gamma(d)q^d}
\sum_{k=0}^{\infty}c_kx^{-k/q}
+
\frac{(-1)^d x^{-d}}{\Gamma(d)}
\sum_{k=0}^{\infty}
\frac{\Gamma(k+d)}
{k!\Gamma(-qk)}
x^{-k},
& 0<q<2,
\\
\displaystyle
\frac{x^{-d/2}}{\Gamma(d)2^d}
\left(
e^{x^{1/2}}
\sum_{k=0}^{\infty}c_kx^{-k/2}
+
(-1)^d e^{-x^{1/2}}
\sum_{k=0}^{\infty}(-1)^kc_kx^{-k/2}
\right),
& q=2,
\\
\displaystyle
\frac{x^\sigma}{\Gamma(d)q^d}
\sum_{\ell=-P}^{P}
e^{2\pi i \ell\sigma}
\exp\left(
x^{1/q}e^{2\pi i \ell/q}
\right)
\sum_{k=0}^{\infty}
c_ke^{-2\pi i \ell k/q}x^{-k/q},
& q>2,
\end{cases}
$$
as $x\to+\infty$, where $c_0=1$, and when $q>2$,
$2P+1$ is the smallest odd integer greater than $q/2$.
To obtain \eqref{s3e1}, it suffices to show,
in each of the three cases,
that
$$
\frac{F(x)}
{\dfrac{1}{\Gamma(d)q^d}x^\sigma e^{x^{1/q}}}
\longrightarrow 1
~\text{as } x\to+\infty.
$$
\textbf{Case 1: $0<q<2$.} Since $c_0=1$, the first case of the expansion gives
$$
\begin{aligned}
\frac{F(x)}
{\dfrac{1}{\Gamma(d)q^d}x^\sigma e^{x^{1/q}}}
&=
1+O\left(x^{-1/q}\right)
+
O\left(
x^{-d-\sigma}e^{-x^{1/q}}
\right)
\to1~\mbox{as}~x\to+\infty.
\end{aligned}
$$

\noindent
\textbf{Case 2: $q=2$.} Then $\sigma=-d/2$, and the second case gives
$$
\begin{aligned}
&\frac{F(x)}
{\dfrac{1}{\Gamma(d)q^d}x^\sigma e^{x^{1/q}}}
=
\frac{F(x)}
{\dfrac{1}{\Gamma(d)2^d}x^{-d/2}e^{x^{1/2}}}
\\
=&
1+O\left(x^{-1/2}\right)
+
(-1)^d e^{-2x^{1/2}}
\left(1+O\left(x^{-1/2}\right)\right)
\to1~\mbox{as}~x\to+\infty.
\end{aligned}
$$

\noindent
\textbf{Case 3: $q>2$.} The term corresponding to $\ell=0$ satisfies
$$
\frac{
\frac{x^\sigma e^{x^{1/q}}}{\Gamma(d)q^d}
\sum\limits_{k=0}^{\infty}c_kx^{-k/q}
}{
\frac{1}{\Gamma(d)q^d}x^\sigma e^{x^{1/q}}
}
=
1+O\left(x^{-1/q}\right)
\to1.
$$
By the definition of $P$,
$2P-1\leq\frac q2<2P+1$.
Since $q>2$, it follows that $P<q/2$.
Hence, if $0<|\ell|\leq P$, then
$0<\left|\frac{2\pi \ell}{q}\right|<\pi$,
and therefore
$\cos\left(\frac{2\pi \ell}{q}\right)<1$.
For every such $\ell$, using $c_0=1$, we have
$$
\begin{aligned}
&
\left|
\frac{\frac{x^\sigma}{\Gamma(d)q^d}
e^{2\pi i \ell\sigma}
\exp\left(
x^{1/q}e^{2\pi i \ell/q}
\right)
\sum\limits_{k=0}^{\infty}
c_ke^{-2\pi i \ell k/q}x^{-k/q}
}{
\frac{1}{\Gamma(d)q^d}x^\sigma e^{x^{1/q}}
}
\right|
\\
=&
\exp\left[
x^{1/q}
\left(
\cos\frac{2\pi \ell}{q}-1
\right)
\right]
\left(
1+O\left(x^{-1/q}\right)
\right)
\to0.
\end{aligned}
$$
Since there are only finitely many indices $\ell$, the third case gives
$$
\frac{F(x)}
{\dfrac{1}{\Gamma(d)q^d}x^\sigma e^{x^{1/q}}}
\to1~\mbox{as}~x\to+\infty.
$$

Thus, in all three cases,
$$
F(x)
=\frac{1}{\Gamma(d)q^d}
x^\sigma e^{x^{1/q}}(1+o(1)),~x\to+\infty.
$$
This proves \eqref{s3e1}.

It remains to prove \eqref{s3e2}.
Write
$$
F(x)
=
\frac{1}{\Gamma(d)q^d}
x^\sigma e^{x^{1/q}}
\bigl(1+\varepsilon(x)\bigr),
$$
where $\varepsilon(x)\to0$ as $x\to+\infty$.
Let $J=[\rho_0,\rho_1]\subset(0,\infty)$. For $\rho\in J$,
$$
F(\rho r^2)
=
\frac{1}{\Gamma(d)q^d}
\rho^\sigma r^{2\sigma}
e^{\rho^{1/q}r^m}
\bigl(1+\varepsilon(\rho r^2)\bigr).
$$
Since $\rho r^2\geq\rho_0r^2$ for $\rho\in J$, we have
$$
\sup_{\rho\in J}
\left|\varepsilon(\rho r^2)\right|
\leq
\sup_{x\geq\rho_0r^2}
|\varepsilon(x)|
\longrightarrow0.
$$
Therefore,
$$
F(\rho r^2)
=
\frac{1}{\Gamma(d)q^d}
\rho^\sigma r^{2\sigma}
e^{\rho^{1/q}r^m}
(1+o(1))
$$
as $r\to\infty$, uniformly for $\rho\in J$.
\end{proof}

Fix $\alpha,\beta>0$ and put
$c_*=(\alpha+\beta)^q$.
For $|\lambda|<c_*$, define
$$
 H_{\alpha,\beta}^{(d)}(\lambda)
 =
 \int_{\C^d}
 e^{-\alpha|z|^m}
 \frac{F(\lambda|z|^2)}{F(a|z|^2)}
 \,\mathrm{d}V(z),
 ~\mbox{where}~a=\beta^q.
$$

\begin{lemma}\label{lem:H-holomorphic}
The function $H_{\alpha,\beta}^{(d)}$ is holomorphic on the disk $|\lambda|<c_*$.
\end{lemma}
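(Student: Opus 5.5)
We prove Lemma~\ref{lem:H-holomorphic} with the standard criterion for holomorphic parameter integrals. For each fixed $z$, the integrand $\lambda\mapsto e^{-\alpha|z|^m}F(\lambda|z|^2)/F(a|z|^2)$ is entire, because $F=E^d_{q,qd}$ is entire and $F(a|z|^2)\geq F(0)=1/\Gamma(qd)>0$, all Taylor coefficients of $F$ being positive. By Morera's theorem together with Fubini (or differentiation under the integral sign), it therefore suffices to show that for every $\rho<c_*$ the integrand is dominated, uniformly for $|\lambda|\le\rho$, by an integrable function of $z$. The positivity of the coefficients gives $|F(\lambda|z|^2)|\le F(|\lambda||z|^2)\le F(\rho|z|^2)$. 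So we need only prove
$$
\int_{\C^d} e^{-\alpha|z|^m}\frac{F(\rho|z|^2)}{F(a|z|^2)}\,\mathrm{d}V(z)<\infty
\qquad\text{for } 0<\rho<c_* .
$$

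Near the origin, on a ball $|z|\le R$, the integrand is continuous and hence bounded, so only the region $|z|>R$ matters. There we apply \eqref{s3e2} of Lemma~\ref{lem:F-asymptotic} with $c=\rho$ and with $c=a$, both fixed positive constants (uniformity is not even needed). Writing $r=|z|$, this gives for large $r$
$$
\frac{F(\rho r^2)}{F(ar^2)}\le C\,\Bigl(\frac{\rho}{a}\Bigr)^{\sigma}\exp\bigl((\rho^{1/q}-a^{1/q})r^m\bigr).
$$
Since $a^{1/q}=\beta$ and $\rho^{1/q}<c_*^{1/q}=\alpha+\beta$, the integrand is bounded by $C e^{-(\alpha+\beta-\rho^{1/q})r^m}$, where the exponent $\alpha+\beta-\rho^{1/q}$ is positive. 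In polar coordinates the tail becomes $\int_R^\infty r^{2d-1}e^{-\varepsilon r^m}\,dr<\infty$. This bound is uniform on $|\lambda|\le\rho$ and yields local uniform convergence.

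Two technical points remain. First, $\rho$ may be small or even equal to $0$, and then the asymptotic estimate \eqref{s3e2} needs a positive $c$. To avoid this, we simply use the monotone bound $F(|\lambda|r^2)\le F(\rho r^2)$ with some $\rho\in(0,c_*)$ satisfying $\rho\ge|\lambda|$. Second, we must justify holomorphy itself. For this, note that the integral of the integrand over any closed triangle in $|\lambda|<\rho$ vanishes by Fubini, and that continuity in $\lambda$ follows from dominated convergence; Morera's theorem then applies.

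The main and only real obstacle is the exponential comparison at infinity, namely that $\rho^{1/q}<\alpha+\beta$ exactly compensates the weight $e^{-\alpha r^m}$ together with the denominator $e^{\beta r^m}$. This comparison is supplied by Lemma~\ref{lem:F-asymptotic}, and it also explains why $c_*=(\alpha+\beta)^q$ is the natural radius.
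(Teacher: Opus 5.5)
Your proposal is correct and follows essentially the same route as the paper. Both use positivity of the Taylor coefficients to dominate $|F(\lambda|z|^2)|$ by $F(\rho|z|^2)$, Lemma~\ref{lem:F-asymptotic} to get the tail bound $C e^{-(\alpha+\beta-\rho^{1/q})|z|^m}$, dominated convergence for continuity, and Fubini with Cauchy's theorem in $\lambda$ plus Morera for holomorphy. One small wording point: the vanishing of the triangle integral comes from Cauchy's theorem applied to the entire integrand for each fixed $z$, and Fubini is only what lets you interchange the two integrals.
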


\begin{proof}
For $\lambda\in\C$ and $z\in\C^d$, set
$$
G(\lambda,z)
=
e^{-\alpha|z|^m}
\frac{F(\lambda|z|^2)}{F(a|z|^2)}.
$$
Fix $0<\rho<c_*$. Since $F$ has positive Taylor coefficients,
$F(a|z|^2)>0$ and
$$
|F(\lambda|z|^2)|
\leq
F(\rho|z|^2),
\qquad |\lambda|\leq\rho.
$$
By Lemma~\ref{lem:F-asymptotic}, there exist $C,R>0$ such that
$$
|G(\lambda,z)|
\leq
C e^{-(\alpha+\beta-\rho^{1/q})|z|^m},
\qquad
|\lambda|\leq\rho,\quad |z|\geq R.
$$
Since $\rho<c_*\equiv (\alpha+\beta)^q$, the function on the right is
integrable over $\C^d$.
On the compact set
$\{|\lambda|\leq\rho,\ |z|\leq R\}$,
the function $G$ is bounded. Combining this with the estimate for $|z|\geq R$, we obtain
an integrable function $h_\rho\in L^1(\C^d)$ such that
$$
|G(\lambda,z)|\leq h_\rho(z),
\qquad |\lambda|\leq\rho.
$$
Hence, by dominated convergence,
$H_{\alpha,\beta}^{(d)}$ is continuous on $|\lambda|<c_*$. Moreover, if
$\Delta$ is a triangle compactly contained in this disk, then Fubini's
theorem and Cauchy's theorem give
$$
\int_{\partial\Delta}
H_{\alpha,\beta}^{(d)}(\lambda)\,\mathrm{d}\lambda
=
\int_{\C^d}
\left(
\int_{\partial\Delta}G(\lambda,z)\,\mathrm{d}\lambda
\right)\,\mathrm{d}V(z)
=0.
$$
Morera's theorem therefore implies that
$H_{\alpha,\beta}^{(d)}$ is holomorphic on $|\lambda|<c_*$.
\end{proof}

\begin{lemma}\label{lem:H-asymptotic}
As $c\to c_*^{-}$,
$$
 H_{\alpha,\beta}^{(d)}(c)
 \sim
 \left(\frac{c}{a}\right)^\sigma
 \frac{\pi^d q\Gamma(qd)}{\Gamma(d)}
 \bigl(c_*^{1/q}-c^{1/q}\bigr)^{-qd}.
$$
\end{lemma}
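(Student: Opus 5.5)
We pass to polar coordinates and then split the integral into a compact region and a tail; only the tail sees the singularity. Write $z=r\zeta$ with $\zeta$ on the unit sphere, so that $\mathrm{d}V=r^{2d-1}\,\mathrm{d}r\,\mathrm{d}S(\zeta)$ and the sphere has area $2\pi^d/\Gamma(d)$. Since the integrand is radial,
$$
H_{\alpha,\beta}^{(d)}(c)=\frac{2\pi^d}{\Gamma(d)}\int_0^\infty e^{-\alpha r^m}\frac{F(cr^2)}{F(ar^2)}\,r^{2d-1}\,\mathrm{d}r .
$$
Fix $R>0$ and split this at $r=R$. On $[0,R]$ the integrand is bounded uniformly for $c\in[c_*/2,c_*]$, so the piece over $[0,R]$ stays bounded as $c\to c_*^-$. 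We will see that the full integral blows up like $(c_*^{1/q}-c^{1/q})^{-qd}$, so this bounded piece is negligible.

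On $[R,\infty)$ we use \eqref{s3e2} of Lemma~\ref{lem:F-asymptotic} with $J=[c_*/2,c_*]$ for the numerator and the fixed value $a$ for the denominator. Recall $a^{1/q}=\beta$. Given $\varepsilon>0$, choose $R$ so that for $r\ge R$ and all $c\in J$,
$$
\frac{F(cr^2)}{F(ar^2)}=\Bigl(\frac{c}{a}\Bigr)^{\sigma}e^{(c^{1/q}-\beta)r^m}\bigl(1+\theta(c,r)\bigr),\qquad |\theta|\le\varepsilon .
$$
The factors $r^{2\sigma}$ cancel here, which is why $\sigma$ appears only through $(c/a)^\sigma$. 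Since $c_*^{1/q}=\alpha+\beta$, the tail integral equals
$$
(1+O(\varepsilon))\Bigl(\frac{c}{a}\Bigr)^{\sigma}\frac{2\pi^d}{\Gamma(d)}\int_R^\infty e^{-s r^m}r^{2d-1}\,\mathrm{d}r,\qquad s=c_*^{1/q}-c^{1/q}\to0^+ .
$$

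Next we evaluate the model integral exactly. Substituting $u=sr^m$ gives
$$
\int_0^\infty e^{-sr^m}r^{2d-1}\,\mathrm{d}r=\frac{1}{m}\Gamma\!\Bigl(\frac{2d}{m}\Bigr)s^{-2d/m}=\frac{q}{2}\Gamma(qd)\,s^{-qd}.
$$
The missing piece $\int_0^R$ of the model integral is $O(1)$, hence also negligible. Multiplying by $\frac{2\pi^d}{\Gamma(d)}$ yields exactly $\frac{\pi^d q\Gamma(qd)}{\Gamma(d)}s^{-qd}$. Letting $c\to c_*^-$ first and then $\varepsilon\to0$ gives the stated equivalence.

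The main obstacle is the uniformity of the error term in $c$ up to and including the endpoint $c_*$. This is exactly what the uniform version \eqref{s3e2} supplies on the compact interval $J$. The only care needed is to fix $R$ (depending on $\varepsilon$) before sending $c\to c_*$, so that the blow-up of the tail dominates the bounded contributions.
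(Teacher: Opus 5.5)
Your proposal is correct and is essentially the paper's argument. Both use the uniform asymptotics \eqref{s3e2} on a compact interval $J$ ending at $c_*$, split at a fixed radius $R$ with a bounded inner piece, compare the tail with the explicit model integral $\int_{\C^d}e^{-\kappa|z|^m}\,\mathrm{d}V(z)=\frac{\pi^d q\Gamma(qd)}{\Gamma(d)}\kappa^{-qd}$, and then let $c\to c_*^-$ before $\varepsilon\to0$. The only cosmetic difference is that you pass to polar coordinates at the outset, whereas the paper uses them only to evaluate the model integral.
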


\begin{proof}
Set
$$
\kappa=c_*^{1/q}-c^{1/q}
=\alpha+\beta-c^{1/q},
$$
so that $\kappa\to0^+$ as $c\to c_*^-$. Choose $\varepsilon_0>0$ such that
$
J=[c_*-\varepsilon_0,c_*]\subset(0,\infty)$.

We first establish
$$
e^{-\alpha r^m}\frac{F(cr^2)}{F(ar^2)}
\sim
\left(\frac ca\right)^\sigma e^{-\kappa r^m}
\qquad (r\to\infty),
$$
uniformly for $c\in J$. By Lemma~\ref{lem:F-asymptotic},
$$
F(cr^2)
=
\frac{1}{\Gamma(d)q^d}
c^\sigma r^{2\sigma}e^{c^{1/q}r^m}
(1+u_c(r)),
$$
where
$
\sup_{c\in J}|u_c(r)|\to0
~(r\to\infty)$,
and
$$
F(ar^2)
=
\frac{1}{\Gamma(d)q^d}
a^\sigma r^{2\sigma}e^{\beta r^m}
(1+v(r)),
\qquad
v(r)\to0.
$$
Hence
$$
e^{-\alpha r^m}\frac{F(cr^2)}{F(ar^2)}
=
\left(\frac ca\right)^\sigma
e^{-\kappa r^m}
\frac{1+u_c(r)}{1+v(r)},
$$
with
$
\sup_{c\in J}
\left|
\frac{1+u_c(r)}{1+v(r)}-1
\right|
\to0$.
Thus, for every $\varepsilon\in(0,1)$, there exists
$R=R_\varepsilon>0$, independent of $c\in J$, such that
\begin{equation}\label{eq:H-integrand-comparison}
(1-\varepsilon)
\left(\frac ca\right)^\sigma e^{-\kappa r^m}
\leq
e^{-\alpha r^m}\frac{F(cr^2)}{F(ar^2)}
\leq
(1+\varepsilon)
\left(\frac ca\right)^\sigma e^{-\kappa r^m}
\end{equation}
for $r\geq R$ and $c\in J\cap(0,c_*)$.

We next show that
$$
H_{\alpha,\beta}^{(d)}(c)
\sim
\left(\frac ca\right)^\sigma
\int_{\C^d}e^{-\kappa|z|^m}\,\mathrm{d}V(z).
$$
Write
$
H_{\alpha,\beta}^{(d)}(c)=I_R(c)+T_R(c)$,
where
$$
I_R(c)
=
\int_{|z|\leq R}
e^{-\alpha|z|^m}
\frac{F(c|z|^2)}{F(a|z|^2)}
\,\mathrm{d}V(z),~
T_R(c)
=
\int_{|z|\geq R}
e^{-\alpha|z|^m}
\frac{F(c|z|^2)}{F(a|z|^2)}
\,\mathrm{d}V(z).
$$
Since the integrand is continuous on
$
J\times\{z\in\C^d:|z|\leq R\}$,
there exists $C_R>0$ such that
$
0\leq I_R(c)\leq C_R,
~c\in J\cap(0,c_*)$.
By \eqref{eq:H-integrand-comparison},
\begin{align}
(1-\varepsilon)
\left(\frac ca\right)^\sigma
\int_{|z|\geq R}e^{-\kappa|z|^m}\,\mathrm{d}V(z)
\notag
\leq T_R(c)\leq
(1+\varepsilon)
\left(\frac ca\right)^\sigma
\int_{|z|\geq R}e^{-\kappa|z|^m}\,\mathrm{d}V(z).
\label{eq:H-tail-comparison}
\end{align}
By polar coordinates,
$$
\begin{aligned}
\int_{\C^d}e^{-\kappa|z|^m}\,\mathrm{d}V(z)
&=
\frac{2\pi^d}{\Gamma(d)}
\int_0^\infty e^{-\kappa r^m}r^{2d-1}\,\mathrm{d}r=
\frac{\pi^d q\Gamma(qd)}{\Gamma(d)}
\kappa^{-qd}.
\end{aligned}
$$
In particular,
$
\int_{\C^d}e^{-\kappa|z|^m}\,\mathrm{d}V(z)\to\infty
~(\kappa\to0^+)$.
Moreover,
$$
0\leq
\int_{\C^d}e^{-\kappa|z|^m}\,\mathrm{d}V(z)
-
\int_{|z|\geq R}e^{-\kappa|z|^m}\,\mathrm{d}V(z)
\leq \frac{\pi^dR^{2d}}{\Gamma(d+1)},
$$
and therefore
$$
\frac{
\displaystyle\int_{|z|\geq R}e^{-\kappa|z|^m}\,\mathrm{d}V(z)
}{
\displaystyle\int_{\C^d}e^{-\kappa|z|^m}\,\mathrm{d}V(z)
}
\to1.
$$
Thus,
$$
1-\varepsilon
\leq
\liminf_{c\to c_*^-}
\frac{
H_{\alpha,\beta}^{(d)}(c)
}{
\displaystyle
\left(\frac ca\right)^\sigma
\int_{\C^d}e^{-\kappa|z|^m}\,\mathrm{d}V(z)
}
\leq
\limsup_{c\to c_*^-}
\frac{
H_{\alpha,\beta}^{(d)}(c)
}{
\displaystyle
\left(\frac ca\right)^\sigma
\int_{\C^d}e^{-\kappa|z|^m}\,\mathrm{d}V(z)
}
\leq
1+\varepsilon.
$$
Letting $\varepsilon\to0^+$ yields
$$
H_{\alpha,\beta}^{(d)}(c)
\sim
\left(\frac ca\right)^\sigma
\int_{\C^d}e^{-\kappa|z|^m}\,\mathrm{d}V(z)
=
\left(\frac ca\right)^\sigma
\frac{\pi^d q\Gamma(qd)}{\Gamma(d)}
\bigl(c_*^{1/q}-c^{1/q}\bigr)^{-qd}.
$$
This proves the desired asymptotic formula.
\end{proof}

We are ready to prove Theorem~\ref{prop:critical-signature}.

\begin{proof}[Proof of Theorem~\ref{prop:critical-signature}]
For $\delta>\delta_{\alpha,\beta}$, we have
$c_\delta<c_*$.
Moreover, since
$$
\beta+\delta
>
\beta+\delta_{\alpha,\beta}=
\frac{\beta^2}{\alpha+\beta}
>
0,
$$
the function
$\delta\longmapsto c_\delta=
\beta^{2q}(\beta+\delta)^{-q}$
is real analytic on
$(\delta_{\alpha,\beta},\infty)$ and takes values in $(0,c_*)$.
By Lemma~\ref{lem:H-holomorphic},
$H_{\alpha,\beta}^{(d)}$ is holomorphic on $|\lambda|<c_*$, and hence its restriction to $(0,c_*)$ is real analytic. Therefore,
$\delta\longmapsto
H_{\alpha,\beta}^{(d)}(c_\delta)$
is real analytic on
$(\delta_{\alpha,\beta},\infty)$.
The remaining factors in \eqref{eq:Phi-integral} are real analytic on the same interval. Therefore, \eqref{eq:Phi-integral} defines the required continuation.

Set
$$
 \kappa(\delta)
 =
 c_*^{1/q}-c_\delta^{1/q}
 =
 \alpha+\beta-
 \frac{\beta^2}{\beta+\delta}.
$$
By Lemma~\ref{lem:H-asymptotic} and \eqref{eq:Phi-integral},
as
$\delta\to\delta_{\alpha,\beta}^{+}$,
$$
 \Phi_{\alpha,\beta}^{(d)}(\delta)
 \sim
 \alpha^{qd}
 \left(\frac{\beta}{A_\delta}\right)^{qd}
 \left(\frac{c_\delta}{a}\right)^\sigma
 \kappa(\delta)^{-qd}.
$$
Since
$
 \frac{c_\delta}{a}
 =
 \left(\frac{\beta}{A_\delta}\right)^q
$
and $q\sigma=d(1-q)$, the product of the two middle factors is
$
 \left(\frac{\beta}{A_\delta}\right)^d
$.
Furthermore,
$
\kappa(\delta)
=
\frac{(\alpha+\beta)
(\delta-\delta_{\alpha,\beta})}
{\beta+\delta}$.
It follows that
$$
 \Phi_{\alpha,\beta}^{(d)}(\delta)
 \sim
 \beta^d\alpha^{qd}
 (\beta+\delta)^{d(q-1)}
 (\alpha+\beta)^{-qd}
 (\delta-\delta_{\alpha,\beta})^{-qd}.
$$
Finally,
$
\beta+\delta_{\alpha,\beta}
=
\frac{\beta^2}{\alpha+\beta}$,
which gives the stated value of $L_{\alpha,\beta}^{(d)}$.
\end{proof}

\subsection{Proof of Theorem~\ref{thm:main}}\hfill

\begin{proof}[Proof of Theorem~\ref{thm:main}]
The sufficiency is immediate. For the necessity, assume that
$$
B_{\alpha,m}^{(d)}B_{\beta,m}^{(d)}
=
B_{\eta,m}^{(d)}B_{\gamma,m}^{(d)}.
$$
Applying both sides to $f_\delta$ and evaluating at the origin gives
$$
\Phi_{\alpha,\beta}^{(d)}(\delta)
=
\Phi_{\eta,\gamma}^{(d)}(\delta),
\qquad \delta>0.
$$
Set
$$
\delta_1=-\frac{\alpha\beta}{\alpha+\beta},
\qquad
\delta_2=-\frac{\eta\gamma}{\eta+\gamma},
\qquad
\delta_0=\max\{\delta_1,\delta_2\}.
$$
By Theorem~\ref{prop:critical-signature},
$\Phi_{\alpha,\beta}^{(d)}$
and
$\Phi_{\eta,\gamma}^{(d)}$
are real analytic on
$(\delta_1,\infty)$ and $(\delta_2,\infty)$, respectively. Since they agree on
$(0,\infty)$, we have
\begin{equation}\label{eq:Phi-identity}
\Phi_{\alpha,\beta}^{(d)}(\delta)
=
\Phi_{\eta,\gamma}^{(d)}(\delta),
\qquad
\delta\in(\delta_0,\infty).
\end{equation}

We show that $\delta_1=\delta_2$. Otherwise, after interchanging the two
pairs if necessary, suppose that $\delta_1>\delta_2$. Then
Theorem~\ref{prop:critical-signature} gives
$$
\Phi_{\alpha,\beta}^{(d)}(\delta)\to+\infty
\qquad (\delta\to\delta_1^+),
$$
whereas $\delta_1\in(\delta_2,\infty)$, so
$\Phi_{\eta,\gamma}^{(d)}$ is bounded in a
neighborhood of $\delta_1$. This contradicts \eqref{eq:Phi-identity}. Therefore,
$\delta_1=\delta_2$.

Put
$
s:=
\frac{\alpha\beta}{\alpha+\beta}
=
\frac{\eta\gamma}{\eta+\gamma}$.
Then \eqref{eq:Phi-identity} holds for $\delta>-s$.
Using Theorem~\ref{prop:critical-signature},
we obtain
$$
L_{\alpha,\beta}^{(d)}
=
\lim_{\delta\to-s^+}
(\delta+s)^{qd}\Phi_{\alpha,\beta}^{(d)}(\delta)
=
\lim_{\delta\to-s^+}
(\delta+s)^{qd}\Phi_{\eta,\gamma}^{(d)}(\delta)
=
L_{\eta,\gamma}^{(d)}.
$$
Set
$
r=\frac{\alpha}{\beta},~
t=\frac{\eta}{\gamma}$.
Since
$
s=\frac{\alpha\beta}{\alpha+\beta}
=\frac{\eta\gamma}{\eta+\gamma}$,
we have
$$
\alpha=s(1+r),
\qquad
\beta=\frac{s(1+r)}{r},
$$
and similarly
$$
\eta=s(1+t),
\qquad
\gamma=\frac{s(1+t)}{t}.
$$
Substitution into the formula for $L_{\alpha,\beta}^{(d)}$ gives
\begin{equation}\label{eq:L-s-r}
L_{\alpha,\beta}^{(d)}
=
s^{qd}(1+r)^{d(1-q)},
\end{equation}
and likewise
$$
L_{\eta,\gamma}^{(d)}
=
s^{qd}(1+t)^{d(1-q)}.
$$
Hence
$
(1+r)^{d(1-q)}
=
(1+t)^{d(1-q)}$.
Since $m\ne2$, we have $q\ne1$, and therefore
$r\mapsto(1+r)^{d(1-q)}$ is injective on $(0,\infty)$. Thus $r=t$, and consequently
$$
\alpha=s(1+r)=\eta,
\qquad
\beta=\frac{s(1+r)}{r}=\gamma.
$$
Hence $(\alpha,\beta)=(\eta,\gamma)$.
\end{proof}

\begin{proof}[Proof of Corollary~\ref{cor:classification}]
For $m\ne2$, the result follows from Theorem~\ref{thm:main}. Let $m=2$ and set
$
s=\frac{\alpha\beta}{\alpha+\beta}$ and
$t=\frac{\eta\gamma}{\eta+\gamma}$.
By \eqref{eq:Gaussian-composition},
$$
B_{\alpha,2}^{(d)}B_{\beta,2}^{(d)}
=Q_{\frac{\alpha+\beta}{4\alpha\beta}}=
B_{s,2}^{(d)},
\qquad
B_{\eta,2}^{(d)}B_{\gamma,2}^{(d)}
=Q_{\frac{\eta+\gamma}{4\eta\gamma}}=
B_{t,2}^{(d)}.
$$
Thus it remains to show that
$
B_{s,2}^{(d)}=B_{t,2}^{(d)}
\Longleftrightarrow
s=t$.
By \eqref{eq:test-transform} with $m=2$ and $z=0$,
the equality
$
B_{s,2}^{(d)}=B_{t,2}^{(d)}
$
implies
$$
\left(\frac{s}{s+\delta}\right)^d
=
\left(\frac{t}{t+\delta}\right)^d,
\qquad \delta>0,
$$
and hence $s=t$.
\end{proof}

\begin{proof}[Proof of Corollary~\ref{cor:commutativity}]
This follows immediately from Corollary~\ref{cor:classification} and the assumption $\alpha\ne\beta$.
\end{proof}

\begin{proof}[Proof of Corollary~\ref{cor:closure}]
For $m=2$, the result follows from \eqref{eq:Gaussian-composition}. Suppose
$m\ne2$ and
$$
B_{\alpha,m}^{(d)}B_{\beta,m}^{(d)}
=
B_{\theta,m}^{(d)}
$$
for some $\theta>0$. By \eqref{eq:test-transform} with $z=0$,
$$\Phi_{\alpha,\beta}^{(d)}(\delta)
=\bigl(
B_{\alpha,m}^{(d)}
B_{\beta,m}^{(d)}f_\delta
\bigr)(0)=\bigl(B_{\theta,m}^{(d)}
f_\delta
\bigr)(0)=
\left(\frac{\theta}{\theta+\delta}\right)^{qd},~\delta>0.$$
Set
$s=\frac{\alpha\beta}{\alpha+\beta}$.
By Theorem~\ref{prop:critical-signature},
$\Phi_{\alpha,\beta}^{(d)}$ is real analytic on $(-s,\infty)$, while
$
\delta\longmapsto
\left(\frac{\theta}{\theta+\delta}\right)^{qd}
$
is real analytic on $(-\theta,\infty)$. Hence,
$$
\Phi_{\alpha,\beta}^{(d)}(\delta)
=
\left(\frac{\theta}{\theta+\delta}\right)^{qd},
\qquad
\delta>\max\{-s,-\theta\}.
$$
We claim that $\theta=s$. If $s<\theta$, then $-s>-\theta$, and
$
\Phi_{\alpha,\beta}^{(d)}(\delta)\to+\infty
~(\delta\to-s^+)$
by Theorem~\ref{prop:critical-signature},
whereas
$
\left(\frac{\theta}{\theta+\delta}\right)^{qd}
\longrightarrow
\left(\frac{\theta}{\theta-s}\right)^{qd}<\infty$.
If $s>\theta$, then
$
\left(\frac{\theta}{\theta+\delta}\right)^{qd}\to+\infty
~(\delta\to-\theta^+)$,
whereas $\Phi_{\alpha,\beta}^{(d)}$ is real analytic at $-\theta$. Thus
$s=\theta$.
Therefore,
$$
\Phi_{\alpha,\beta}^{(d)}(\delta)
=
\left(\frac{s}{s+\delta}\right)^{qd},
\qquad \delta>-s.
$$
Multiplying by $(\delta+s)^{qd}$ and letting $\delta\to-s^+$ gives
$$
L_{\alpha,\beta}^{(d)}
=
\lim_{\delta\to-s^+}
(\delta+s)^{qd}\Phi_{\alpha,\beta}^{(d)}(\delta)
=
s^{qd}.
$$
Let $r=\alpha/\beta$. Then \eqref{eq:L-s-r} gives
$$
L_{\alpha,\beta}^{(d)}
=
s^{qd}(1+r)^{d(1-q)}.
$$
Hence
$(1+r)^{d(1-q)}=1$.
Since $m\ne2$, we have $q\ne1$, while $1+r>1$. Thus no such
$\theta$ exists when $m\ne2$.
\end{proof}

\noindent\textbf{Acknowledgment}
This work is partially supported by NSFC(12471122; 12271090).

\end{document}